\theoremstyle{plain}
\newtheorem{theorem}                 {Theorem}      [section]
\newtheorem{corollary}    [theorem]  {Corollary}
\newtheorem{lemma}        [theorem]  {Lemma}
\newtheorem{proposition}  [theorem]  {Proposition}
\theoremstyle{definition}
\newtheorem{definition}   [theorem]  {Definition}
\newtheorem{remark}       [theorem]  {Remark}
\numberwithin{equation}{section}
\def \cn{{\mathbb C}}
\def \C{{\mathbb C}}
\def \hn{{\mathbb H}}
\def \rn{{\mathbb R}}
\def \zn{{\mathbb Z}}
\def \A{\mathcal A}
\def \B{\mathcal B}
\def \E{\mathcal E}
\def \F{\mathcal F}
\def \P{\mathcal P}
\def \Z{\mathcal Z}
\def\nab#1#2{\hbox{$\nabla$\kern -.3em\lower 1.0 ex
		\hbox{$#1$}\kern -.1 em {$#2$}}}
\def \Re{\mathfrak R\mathfrak e}
\def \g{\mathfrak{g}}
\def \GLC#1{\mathbf{GL}_{#1}(\cn)}
\def \glc#1{\mathfrak{gl}_{#1}(\cn)}
\def \GLH#1{\mathbf{GL}_{#1}(\hn)}
\def \SL2{\widetilde{\text{\bf SL}}_{2}(\rn)}
\def \SO#1{\mathbf{SO}(#1)}
\def \U#1{\text{\bf U}(#1)}
\def \u#1{\mathfrak{u}(#1)}
\def \SU#1{\text{\bf SU}(#1)}
\def \Sp#1{\text{\bf Sp}(#1)}
\def \sp#1{\mathfrak{sp}(#1)}
\DeclareMathOperator{\Div}{div} 
\DeclareMathOperator{\trace}{trace}
\numberwithin{equation}{section}
\begin{document}

\title[Explicit harmonic morphisms and $p$ harmonic functions]
{Explicit harmonic morphisms and  $p$-harmonic functions from the complex and quaternionic Grassmannians}

\author{Elsa Ghandour}
\address{Mathematics, Faculty of Science\\
University of Lund\\
Box 118, Lund 221 00\\
Sweden}
\email{Elsa.Ghandour@hotmail.com}

\author{Sigmundur Gudmundsson}
\address{Mathematics, Faculty of Science\\
	University of Lund\\
	Box 118, Lund 221 00\\
	Sweden}
\email{Sigmundur.Gudmundsson@math.lu.se}

\begin{abstract}
We construct explicit complex-valued $p$-harmonic functions and harmonic morphisms on the classical compact symmetric complex and quaternionic Grassmannians.  The ingredients for our construction method are joint eigenfunctions of the classical Laplace-Beltrami and the so called conformality operator. A known duality principle implies that these $p$-harmonic functions and harmonic morphisms also induce such solutions on the Riemannian symmetric non-compact dual spaces.
\end{abstract}

\subjclass[2020]{31B30, 53C43, 58E20}

\keywords{harmonic morphisms, $p$-harmonic functions, Grassmannian manifolds}


\dedicatory{Corresponding Author: Sigmundur.Gudmundsson@math.lu.se}

\maketitle

\section{Introduction}
\label{section-introduction}

For nearly two centuries mathematicians and physicists have been interested in biharmonic functions.  They appear in several fields such as continuum mechanics, elasticity theory, as well as two-dimensional hydrodynamics problems involving Stokes flows of incompressible Newtonian fluids. Biharmonic functions have a rich and interesting history, a survey of which can be found in the article \cite{Mel}.  The literature on biharmonic functions is vast, but with only very few exceptions the domains are either surfaces or open subsets of flat Euclidean space, see for example \cite{Cad}. The development of the last few  years has changed this situation and can be traced at the regularly updated online bibliography \cite{Gud-p-bib}, maintained by the second author. 
\medskip

It turns out that a natural habitat for complex-valued $p$-harmonic functions and harmonic morphisms are the classical Riemannian symmetric spaces.  They come in pairs consisting of a non-compact $G/K$ and its compact dual companion $U/K$.  It is known that this duality implies that solutions on one of these spaces provides the same on the other.  For this reason we shall here only deal with the compact Riemannian symmetric spaces.

In this paper we manufacture explicit complex-valued proper $p$-harmonic functions and harmonic morphisms on both the complex and the quaternionic Grassmannians
$$\U{n+m}/\U n\times\U m\ \ \text{and}\ \ \Sp{n+m}/\Sp n\times\Sp m.$$
For this we apply two different construction techniques which are presented in Theorem \ref{theorem-rational} and the recent Theorem \ref{theorem-p-harmonic}.  The main ingredients for both these recipes are the common eigenfunctions of the tension field $\tau$ and the conformality operator $\kappa$.  Such eigenfunctions have earlier been constructed on the compact simple Lie groups $\SO n$, $\SU n$ and $\Sp n$ in \cite{Gud-Sak-1}, on the symmetric spaces 
$$\SU n/\SO n,\ \Sp n/\U n,\ \SO {2n}/\U n,\ \SU{2n}/\Sp n$$ 
in \cite{Gud-Sif-Sob-2} and the real Grassmannians $\SO{n+m}/\SO n\times\SO m$ in \cite{Gha-Gud-4}. This means that with this paper we complete the list of all the compact classical Riemannian symmetric spaces.

\section{Eigenfunctions and Eigenfamilies }
\label{section-eigenfunctions}

Let $(M,g)$ be an $m$-dimensional Riemannian manifold and $T^{\cn}M$ be the complexification of the tangent bundle $TM$ of $M$. We extend the metric $g$ to a complex-bilinear form on $T^{\cn}M$.  Then the gradient $\nabla\phi$ of a complex-valued function $\phi:(M,g)\to\cn$ is a section of $T^{\cn}M$.  In this situation, the well-known complex linear {\it Laplace-Beltrami operator} (alt. {\it tension field}) $\tau$ on $(M,g)$ acts locally on $\phi$ as follows
$$
\tau(\phi)=\Div (\nabla \phi)=\sum_{i,j=1}^m\frac{1}{\sqrt{|g|}} \frac{\partial}{\partial x_j}
\left(g^{ij}\, \sqrt{|g|}\, \frac{\partial \phi}{\partial x_i}\right).
$$
For two complex-valued functions $\phi,\psi:(M,g)\to\cn$ we have the following well-known fundamental relation
\begin{equation}\label{equation-basic}
\tau(\phi\, \psi)=\tau(\phi)\,\psi +2\,\kappa(\phi,\psi)+\phi\,\tau(\psi),
\end{equation}
where the complex bilinear {\it conformality operator} $\kappa$ is given by $$\kappa(\phi,\psi)=g(\nabla \phi,\nabla \psi).$$  Locally this satisfies 
$$\kappa(\phi,\psi)=\sum_{i,j=1}^mg^{ij}\cdot\frac{\partial\phi}{\partial x_i}\frac{\partial \psi}{\partial x_j}.$$

\begin{definition}\cite{Gud-Sak-1}\label{definition-eigenfamily}
Let $(M,g)$ be a Riemannian manifold. Then a complex-valued function $\phi:M\to\cn$ is said to be an {\it eigenfunction} if it is eigen both with respect to the Laplace-Beltrami operator $\tau$ and the conformality operator $\kappa$ i.e. there exist complex numbers $\lambda,\mu\in\cn$ such that $$\tau(\phi)=\lambda\cdot\phi\ \ \text{and}\ \ \kappa(\phi,\phi)=\mu\cdot \phi^2.$$	
A set $\E =\{\phi_i:M\to\cn\ |\ i\in I\}$ of complex-valued functions is said to be an {\it eigenfamily} on $M$ if there exist complex numbers $\lambda,\mu\in\cn$ such that for all $\phi,\psi\in\E$ we have 
$$\tau(\phi)=\lambda\cdot\phi\ \ \text{and}\ \ \kappa(\phi,\psi)=\mu\cdot \phi\,\psi.$$ 
\end{definition}

With the following result we show that a given eigenfamily $\E$ can be used to produce a large collection $\P_d(\E)$ of such objects.

\begin{theorem}\label{theorem-polynomials}
Let $(M,g)$ be a Riemannian manifold and the set of complex-valued functions  $$\E=\{\phi_i:M\to\cn\,|\,i=1,2,\dots,n\}$$ 
be a finite eigenfamily i.e. there exist complex numbers $\lambda,\mu\in\cn$ such that for all $\phi,\psi\in\E$ $$\tau(\phi)=\lambda\cdot\phi\ \ \text{and}\ \ \kappa(\phi,\psi)=\mu\cdot\phi\,\psi.$$  
Then the set of complex homogeneous polynomials of degree $d$
$$\P_d(\E)=\{P:M\to\cn\,|\, P\in\cn[\phi_1,\phi_2,\dots,\phi_n],\, P(\alpha\cdot\phi)=\alpha^d\cdot P(\phi),\, \alpha\in\cn\}$$ 
is an eigenfamily on $M$ such that for all $P,Q\in\P_d(\E)$ we have
$$\tau(P)=(d\,\lambda+d(d-1)\,\mu)\cdot P\ \ \text{and}\ \ \kappa(P,Q)=d^2\mu\cdot P\, Q.$$
\end{theorem}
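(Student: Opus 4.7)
\medskip

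\noindent\textbf{Plan of proof.}
The plan is to reduce the statement to monomials and proceed by induction on the degree $d$, using the Leibniz-type identity \eqref{equation-basic} for $\tau$ and the fact that $\kappa$ is a first-order derivation in each argument.  A general $P\in\H^d_\E$ is a complex-linear combination of monomials of the form $\phi_{i_1}\cdots\phi_{i_d}$, and since both $\tau$ and $\kappa$ are complex-linear (and $\kappa$ is complex-bilinear), it suffices to verify the two eigenvalue identities on such monomials.

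First I would record the auxiliary identity $\nabla(\phi\psi)=\phi\cdot\nabla\psi+\psi\cdot\nabla\phi$, which immediately yields the Leibniz rule
\[
\kappa(\phi\cdot\psi,\chi)=\phi\cdot\kappa(\psi,\chi)+\psi\cdot\kappa(\phi,\chi)
\]
for $\kappa$.  Using this, one checks by a short induction on $d$ that for every monomial $P=\phi_{i_1}\cdots\phi_{i_d}\in\H^d_\E$ and every $\phi\in\E$ one has $\kappa(\phi,P)=d\cdot\mu\cdot\phi\cdot P$.  Applying the Leibniz rule once more and iterating, one then obtains for any two monomials $P,Q\in\H^d_\E$
\[
\kappa(P,Q)=\sum_{k=1}^{d}\bigl(\prod_{j\ne k}\phi_{i_j}\bigr)\cdot\kappa(\phi_{i_k},Q)=d\cdot\sum_{k=1}^{d}\bigl(\prod_{j\ne k}\phi_{i_j}\bigr)\cdot\mu\cdot\phi_{i_k}\cdot Q=d^2\cdot\mu\cdot P\cdot Q,
\]
which gives the claimed formula for $\kappa$ on $\H^d_\E\times\H^d_\E$.

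For the tension field I would induct on $d$.  Writing a monomial $P\in\H^d_\E$ as $P=\phi_{i_1}\cdot P'$ with $P'\in\H^{d-1}_\E$, the identity \eqref{equation-basic} combined with $\tau(\phi_{i_1})=\lambda\cdot\phi_{i_1}$, the already-established $\kappa(\phi_{i_1},P')=(d-1)\cdot\mu\cdot\phi_{i_1}\cdot P'$, and the inductive hypothesis $\tau(P')=\bigl((d-1)\lambda+(d-1)(d-2)\mu\bigr)\cdot P'$ yields
\[
\tau(P)=\bigl[\lambda+2(d-1)\mu+(d-1)\lambda+(d-1)(d-2)\mu\bigr]\cdot P=\bigl(d\lambda+d(d-1)\mu\bigr)\cdot P,
\]
which is the desired eigenvalue for $\tau$.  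Extending both identities to arbitrary $P,Q\in\H^d_\E$ by linearity and bilinearity completes the proof.

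The step requiring the most care is the bookkeeping in the Leibniz expansion of $\kappa$ on two monomials of degree $d$, where one has to confirm that summing $d$ contributions from each side jointly produces the factor $d^2$; everything else is a routine application of \eqref{equation-basic} together with the definitions.
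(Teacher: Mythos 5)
Your proof is correct and follows essentially the same route as the paper: reduce to monomials by (bi)linearity, exploit the derivation property of $\kappa$ to produce the factor $d^2$, and induct on the degree via the product formula \eqref{equation-basic} for $\tau$. If anything, your bookkeeping for general monomials $\phi_{i_1}\cdots\phi_{i_d}$ is more complete than the paper's, which only writes out the case of pure powers $\phi_1^d$ and $\phi_2^d$.
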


\begin{proof}
The tension field $\tau$ is linear and the conformality operator $\kappa$ is bilinear.  For this reason it is suffcient to prove the result for monomials only. Furthermore, because of the symmetry of $\kappa$ we need only to pick $P$ and $Q$ of the form $P=\phi_1^d$ and $Q=\phi_2^d$.  Then
$$\kappa(P,Q)=\kappa(\phi_1^d,\phi_2^d)=d\cdot\phi_1^{d-1}\cdot\kappa(\phi_1,\phi_2)\cdot d\cdot\phi_2^{d-1}=d^2\mu\cdot P\, Q.$$	
For $\phi\in\P_1(\E)$, we know that $\tau (\phi)=\lambda\cdot \phi$ so the first statement is satisfied for $d=1$.  It then follows by the induction hypothesis that 
\begin{eqnarray*}
\tau(\phi^{d+1})&=&\tau(\phi^d)\,\phi+2\, \kappa(\phi^d,\phi)+\phi^d\,\tau(\phi)\\
&=&(d\,\lambda+d(d-1)\,\mu)\cdot\phi^{d+1}+2\, d\,\mu\cdot \phi^{d+1}+\lambda\cdot\phi^{d+1}\\
&=&(d+1)\,\lambda+(d+1)d\,\mu)\cdot\phi^{d+1}.
\end{eqnarray*}
\end{proof}

\section{Harmonic Morphisms}
\label{section-harmonic-morphisms}

In this section we discuss the much studied harmonic morphisms between Riemannian manifolds. In Theorem  \ref{theorem-rational} we describe how these can be constructed, via eigenfamilies, in the case when the codomain is the standard complex plane.

\begin{definition}\cite{Fug-1},\cite{Ish}
A map $\phi:(M,g)\to (N,h)$ between Riemannian manifolds is
called a {\it harmonic morphism} if, for any harmonic function
$f:U\to\rn$ defined on an open subset $U$ of $N$ with $\phi^{-1}(U)$
non-empty, $f\circ\phi:\phi^{-1}(U)\to\rn$ is a harmonic function.
\end{definition}

The standard reference for the extensive theory of harmonic morphisms is the book \cite{Bai-Woo-book}, but we also recommend the updated online bibliography \cite{Gud-bib}.  The following characterisation of harmonic morphisms between Riemannian manifolds is due to Fuglede and Ishihara, see \cite{Fug-1}, \cite{Ish}. For the definition of horizontal (weak) conformality we refer to \cite{Bai-Woo-book}.

\begin{theorem}\cite{Fug-1}, \cite{Ish}
	A map $\phi:(M,g)\to (N,h)$ between Riemannian manifolds is a harmonic morphism if and only if it is a horizontally (weakly) conformal harmonic map.
\end{theorem}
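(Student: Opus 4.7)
The statement is a biconditional, so the plan is to prove each direction separately, with the common starting point being the classical composition formula for the tension field: for $f:U\to\R$ smooth on an open subset of $N$,
\begin{equation*}
\tau(f\circ\phi)=\trace_g(\nabla df)(d\phi,d\phi)+df(\tau(\phi)).
\end{equation*}
The easy direction $(\Leftarrow)$ is immediate from this: assuming $\phi$ is harmonic and horizontally weakly conformal with dilation $\lambda$, a short local computation gives $\trace_g(\nabla df)(d\phi,d\phi)=\lambda^{2}(\tau_{N}f)\circ\phi$ at regular points (and both sides vanish at critical points), while harmonicity kills the remaining term, so $\tau(f\circ\phi)=\lambda^{2}(\tau_{N}f)\circ\phi$, which vanishes on $\phi^{-1}(U)$ whenever $\tau_{N}f=0$.

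For the harder direction $(\Rightarrow)$ I would argue pointwise. Fix $x\in M$, set $y=\phi(x)$, and work in normal coordinates near $y$. The strategy is to plug enough local harmonic test functions into the composition formula at $x$ and extract the two conclusions by linear algebra. The crucial technical input, which I expect to be the main obstacle, is a local existence lemma: for every covector $\alpha\in T_{y}^{*}N$ and every symmetric trace-free bilinear form $H$ on $T_{y}N$, there is a function $f$ harmonic on a neighbourhood of $y$ with $df(y)=\alpha$ and $\mathrm{Hess}(f)(y)=H$. The trace-free constraint on $H$ is the only condition imposed by harmonicity on a $2$-jet at a single point, and such an $f$ can be produced by correcting the obvious quadratic model in normal coordinates by the solution of a small Dirichlet problem whose inhomogeneity already vanishes at $y$; standard elliptic regularity then makes the correction $O(|y|^{3})$ and so preserves the prescribed $2$-jet.

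With such test functions available, the composition formula at $x$ becomes $\trace_g(\nabla df)(d\phi,d\phi)|_{x}+\alpha(\tau(\phi)(x))=0$ for every admissible pair $(\alpha,H)$. Taking $\alpha=0$ and letting $H$ range over all trace-free symmetric forms forces the symmetric tensor $\sum_{k}d\phi(E_{k})\otimes d\phi(E_{k})$ on $T_{y}N$, built from any orthonormal frame $\{E_{k}\}$ at $x$, to be a scalar multiple of the metric dual on $T_{y}N$, which is precisely horizontal weak conformality at $x$. With this in hand, taking $H=0$ and letting $\alpha$ vary freely yields $\alpha(\tau(\phi)(x))=0$ for every covector, so $\tau(\phi)(x)=0$. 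Since $x$ was arbitrary, $\phi$ is both harmonic and horizontally weakly conformal, completing the proof.
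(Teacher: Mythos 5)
The paper offers no proof of this statement: it is quoted from Fuglede and Ishihara and used as a black box, so there is nothing internal to compare against. Your argument is, in outline, exactly the classical Fuglede--Ishihara proof (it is also the route taken in the monograph \cite{Bai-Woo-book}): the easy direction follows from the composition law for the tension field together with the pointwise identity $\trace_g(\nabla df)(d\phi,d\phi)=\lambda^2(\tau_N f)\circ\phi$, and the converse is obtained by feeding a sufficiently rich family of germs of harmonic functions at $y=\phi(x)$ into that law and separating the first- and second-order contributions by linear algebra. The decomposition of the admissible $2$-jets into arbitrary $\alpha$ plus trace-free $H$, the identification of ``$\sum_k d\phi(E_k)\otimes d\phi(E_k)$ proportional to $h_y$'' with horizontal weak conformality, and the automatic nonnegativity of the conformal factor are all correct.

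The one step that is not quite right as written is the jet-prescription lemma. If $f_0$ is the linear-plus-quadratic model in normal coordinates and $u$ solves $\Delta_N u=-\Delta_N f_0$ on $B_\rho(y)$ with zero boundary data, the inhomogeneity is only $O(r)$, and interior Schauder estimates give $|\mathrm{Hess}\,u(y)|=O(\rho)$ --- small, but not zero --- so the correction does not literally preserve the prescribed $2$-jet and $f_0+u$ is not $O(|x-y|^3)$. This is harmless for the overall argument and can be patched in either of two standard ways: (i) the harmonic function $f_\rho=f_0+u_\rho$ has an exactly trace-free Hessian at $y$ which converges to $H$ as $\rho\to 0$, so the identity $\langle \mathrm{Hess}f_\rho(y),S\rangle+df_\rho(y)(\tau(\phi)(x))=0$ passes to the limit and yields $\langle H,S\rangle+\alpha(\tau(\phi)(x))=0$ for every admissible pair, which is all the linear algebra requires; or (ii) for small $\rho$ the map from prescribed to achieved jets is a small linear perturbation of the identity on the closed subspace of admissible jets, hence surjective. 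With either patch your proof is complete and coincides with the standard one.
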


When the codomain is the standard Euclidean complex plane a function $\phi:(M,g)\to\cn$ is a harmonic morphism i.e. harmonic and horizontally conformal if and only if $\tau(\phi)=0$ and $\kappa(\phi,\phi)=0$.  This explains why $\kappa$ is called the conformality operator.

The following result of Baird and Eells gives the theory of complex-valued harmonic morphisms a strong geometric flavour.  It provides a useful tool for the construction of minimal submanifolds of codimension two.  This is our main motivation for studying these maps.

\begin{theorem}\cite{Bai-Eel}
Let $\phi:(M,g)\to (N^2,h)$ be a horizontally conformal map from a Riemannian manifold to a surface.  Then $\phi$ is harmonic if and only if its fibres are minimal at regular points $\phi$.
\end{theorem}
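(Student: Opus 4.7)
My plan is to compute the tension field of $\phi$ directly in a well chosen orthonormal frame adapted to the horizontal/vertical splitting, and then observe the crucial dimensional collapse that occurs exactly when $\dim N = 2$. Throughout I work at a regular point, where horizontal conformality means $\phi$ is a submersion whose restriction of $d\phi$ to the horizontal distribution $\mathcal H = (\ker d\phi)^\perp$ is a conformal isomorphism with some dilation $\lambda>0$.

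First I would pick, at an arbitrary regular point $x\in M$, a local orthonormal frame $\{V_1,\dots,V_{m-n}\}$ tangent to the fibre through $x$ together with a horizontal frame $\{H_1,\dots,H_n\}$ such that $d\phi(H_i)=\lambda\,E_i$ for a local orthonormal frame $\{E_1,\dots,E_n\}$ on $N$. Using that $\tau(\phi)=\sum_i(\nabla d\phi)(e_i,e_i)$, I split the sum according to the frame. Since $d\phi(V_a)=0$, the vertical part collapses to
\[
\sum_{a=1}^{m-n}(\nabla d\phi)(V_a,V_a)\;=\;-d\phi\Bigl(\sum_{a=1}^{m-n}(\nabla_{V_a}V_a)^{\mathcal H}\Bigr)\;=\;-(m-n)\,d\phi(\mu^{\mathcal V}),
\]
where $\mu^{\mathcal V}$ is the mean curvature vector of the fibre, which is horizontal.

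Next I would work out the horizontal contribution. Using $d\phi(H_i)=\lambda E_i$ and the fact that $\nabla^N$ is metric, the identity $\phi^*h=\lambda^2 g|_{\mathcal H}$ forces
\[
\sum_{i=1}^{n}(\nabla d\phi)(H_i,H_i)\;=\;\sum_{i=1}^{n}\Bigl(H_i(\lambda)\,E_i+\lambda\,\nabla^{N}_{d\phi(H_i)}E_i-d\phi(\nabla_{H_i}H_i)\Bigr),
\]
and after invoking the definition of $\lambda$ and expanding $\nabla_{H_i}H_i$ against the frame (with the key cancellations coming from $H_j(\lambda^2)=2\lambda H_j(\lambda)$ paired with the symmetry of the Levi-Civita connection), this simplifies to $-(n-2)\,d\phi(\mathrm{grad}^{\mathcal H}\ln\lambda)$. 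Combining, one obtains the Baird--Eells formula
\[
\tau(\phi)\;=\;-(n-2)\,d\phi\bigl(\mathrm{grad}^{\mathcal H}\ln\lambda\bigr)\;-\;(m-n)\,d\phi(\mu^{\mathcal V}).
\]

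With the formula in hand, the proof finishes at once in the surface case: setting $n=2$ kills the first term, leaving $\tau(\phi)=-(m-2)\,d\phi(\mu^{\mathcal V})$. Since $\mu^{\mathcal V}$ is horizontal and $d\phi$ is injective on $\mathcal H$ at regular points, $\tau(\phi)=0$ if and only if $\mu^{\mathcal V}=0$, i.e.\ the fibres are minimal at regular points. The main obstacle in the plan is the horizontal calculation: one has to track the mixed terms $H_i(\lambda)E_j$, $\lambda\,\nabla^N_{E_i}E_i$ and the projection of $\nabla_{H_i}H_i$ onto $\mathcal H$, and verify that they collapse to a single multiple of $(n-2)\,\mathrm{grad}^{\mathcal H}\ln\lambda$; this is precisely where the dimension of the codomain enters, so it is essential to carry it out cleanly rather than bury it under routine symbol pushing.
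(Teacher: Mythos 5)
The paper does not prove this theorem at all: it is quoted verbatim from Baird--Eells \cite{Bai-Eel} (see also \cite{Bai-Woo-book}), so there is no in-paper argument to compare against. Your proposal is the standard proof from that literature, namely the fundamental equation for horizontally (weakly) conformal maps,
$$\tau(\phi)=(2-n)\,d\phi\bigl(\mathrm{grad}^{\mathcal H}\ln\lambda\bigr)-(m-n)\,d\phi(\mu^{\mathcal V}),$$
followed by the observation that the dilation term dies precisely when $n=2$ and that $d\phi$ is injective on $\mathcal H$ at regular points. The outline is correct: the vertical contribution does reduce to $-(m-n)\,d\phi(\mu^{\mathcal V})$ once the $V_a$ are chosen vertical on a neighbourhood, and the horizontal contribution does collapse to the $\mathrm{grad}\ln\lambda$ term, though that computation (which you rightly flag as the real work) needs the Koszul formula applied to $g(\nabla_{H_i}H_i,H_j)$ together with $\phi^*h=\lambda^2 g|_{\mathcal H}$, and it is the only place where care is genuinely required. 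Two small points you should not leave implicit. First, the theorem is stated for horizontally \emph{weakly} conformal maps, which may have critical points where $\lambda=0$ and where the fibres need not be submanifolds; your equivalence is established only on the regular set, so for the ``minimal fibres $\Rightarrow$ harmonic'' direction you must still argue that $\tau(\phi)=0$ on the set of regular points forces $\tau(\phi)=0$ everywhere (this follows from continuity of $\tau(\phi)$ and the fact that the regular set is dense where $\phi$ is non-constant). Second, fix a normalisation for $\mu^{\mathcal V}$ (mean curvature versus trace of the second fundamental form) so that the factor $(m-n)$ is consistent; it does not affect the conclusion since only the vanishing of the term matters.
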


The next result shows that eigenfamilies can be utilised to manufacture a variety of harmonic morphisms.

\begin{theorem}\cite{Gud-Sak-1}\label{theorem-rational}
Let $(M,g)$ be a Riemannian manifold and 
$$\E =\{\phi_1,\dots ,\phi_n\}$$ 
be an eigenfamily of complex-valued functions on $M$. If $P,Q:\cn^n\to\cn$ are linearily independent homogeneous polynomials of the same positive degree then the quotient
$$\frac{P(\phi_1,\dots ,\phi_n)}{Q(\phi_1,\dots ,\phi_n)}$$ 
is a non-constant harmonic morphism on the open and dense subset
$$\{p\in M| \ Q(\phi_1(p),\dots ,\phi_n(p))\neq 0\}.$$
\end{theorem}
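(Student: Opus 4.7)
The strategy is to reduce everything to the previous theorem on $\H^d_\E$ and then to perform two short Leibniz-rule computations. Writing $P$ and $Q$ as shorthand for the compositions $P(\phi_1,\dots,\phi_n)$ and $Q(\phi_1,\dots,\phi_n)$, the previous theorem guarantees that both lie in the eigenfamily $\H^d_\E$, so
$$\tau(P)=\Lambda\cdot P,\quad \tau(Q)=\Lambda\cdot Q,\quad \kappa(P,P)=\kappa(P,Q)=\kappa(Q,Q)=d^2\mu\cdot (\,\cdot\,)$$
with $\Lambda=d\lambda+d(d-1)\mu$, the last line meaning that each $\kappa$-value equals $d^2\mu$ times the appropriate product $P\cdot P$, $P\cdot Q$ or $Q\cdot Q$. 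This is the only ingredient needed.

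Set $f=P/Q$ on the open set $U=\{Q\neq 0\}$. To show horizontal conformality I compute $\kappa(f,f)$ via the bilinearity of $\kappa$ and the Leibniz rule $\nabla(P/Q)=(Q\,\nabla P-P\,\nabla Q)/Q^2$:
$$\kappa(f,f)=\frac{1}{Q^4}\Bigl(Q^2\,\kappa(P,P)-2PQ\,\kappa(P,Q)+P^2\,\kappa(Q,Q)\Bigr)=\frac{d^2\mu}{Q^4}\bigl(P^2Q^2-2P^2Q^2+P^2Q^2\bigr)=0.$$

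For harmonicity I exploit $P=f\cdot Q$ and the basic identity (\ref{equation-basic}), giving $\tau(P)=\tau(f)\,Q+2\kappa(f,Q)+f\,\tau(Q)$. Since $\tau(P)=\Lambda P=\Lambda fQ=f\,\tau(Q)$, this simplifies to $\tau(f)\,Q=-2\kappa(f,Q)$. But
$$\kappa(f,Q)=\kappa(P/Q,Q)=\tfrac{1}{Q}\,\kappa(P,Q)-\tfrac{P}{Q^2}\,\kappa(Q,Q)=\tfrac{1}{Q}\cdot d^2\mu PQ-\tfrac{P}{Q^2}\cdot d^2\mu Q^2=0,$$
so $\tau(f)=0$ on $U$. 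Combined with Fuglede--Ishihara this establishes that $f$ is a harmonic morphism on $U$. Finally, $U$ is open and dense because $Q$ is a polynomial expression in the $\phi_i$, and non-constancy of $f$ follows from the linear independence of $P$ and $Q$ as polynomials together with the fact that for an eigenfamily to be interesting the $\phi_i$ must separate points; I expect this last step to be the only delicate point, but in the geometric setting at hand it is immediate from the explicit form of the $\phi_i$ used later, and indeed no part of the computation above requires it.
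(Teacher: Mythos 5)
The paper does not actually prove Theorem \ref{theorem-rational}; it is quoted from \cite{Gud-Sak-1}. Your two computations are correct and are essentially the argument of the cited source: $\kappa(P/Q,P/Q)=0$ follows from the quotient rule for gradients together with $\kappa(P,P)=d^2\mu P^2$, $\kappa(P,Q)=d^2\mu PQ$, $\kappa(Q,Q)=d^2\mu Q^2$, and $\tau(P/Q)=0$ follows from applying (\ref{equation-basic}) to $P=(P/Q)\cdot Q$ and the vanishing of $\kappa(P/Q,Q)$. This is the right reduction to the eigenfamily property of $\H^d_\E$, and Fuglede--Ishihara then gives the harmonic morphism property wherever $Q(\phi)\neq 0$.

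The gap is in your last three lines. First, density of $U=\{Q(\phi_1,\dots,\phi_n)\neq 0\}$ does not follow merely from $Q(\phi)$ being ``a polynomial expression in the $\phi_i$'': a smooth function can perfectly well vanish on a set with non-empty interior. What one actually uses is that $Q(\phi)\in\H^d_\E$ satisfies the second-order elliptic equation $\tau(Q(\phi))=\Lambda\cdot Q(\phi)$, so by unique continuation (or real-analyticity, available for the homogeneous spaces considered in this paper) its zero set has empty interior on a connected $M$ unless $Q(\phi)\equiv 0$ --- and the non-vanishing of $Q(\phi)$ is itself an implicit hypothesis you should flag. Second, non-constancy: if $P(\phi)/Q(\phi)=c$ on $U$, then $R(\phi)$ with $R=P-cQ$ vanishes on the dense set $U$ and hence everywhere. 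Linear independence of $P$ and $Q$ only gives $R\neq 0$ as a polynomial, not $R(\phi_1,\dots,\phi_n)\not\equiv 0$ as a function on $M$, since the $\phi_i$ may satisfy algebraic relations. So this step needs either the additional hypothesis that distinct homogeneous degree-$d$ polynomials yield distinct functions on $M$, or a verification for the concrete eigenfamilies at hand (for matrix-coefficient families this comes from linear independence of matrix coefficients, not from the $\phi_i$ ``separating points,'' which is neither the relevant condition nor established). You correctly identified this as the delicate point, but the argument you offer for it is not a proof.
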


\section{Proper $p$-Harmonic Functions}
\label{section-p-harmonic-functions}

In this section we describe a method for manufacturing  complex-valued proper $p$-harmonic functions on Riemannian manifolds. This method was recently introduced in \cite{Gud-Sob-1}.
\medskip

\begin{definition}\label{definition-proper-p-harmonic}
Let $(M,g)$ be a Riemannian manifold. For a positive integer $p$, the iterated Laplace-Beltrami operator $\tau^p$ is given by
$$\tau^{0} (\phi)=\phi\ \ \text{and}\ \ \tau^p (\phi)=\tau(\tau^{(p-1)}(\phi)).$$	We say that a complex-valued function $\phi:(M,g)\to\cn$ is
\begin{enumerate}
\item[(i)] {\it $p$-harmonic} if $\tau^p (\phi)=0$, and
\item[(ii)] {\it proper $p$-harmonic} if $\tau^p (\phi)=0$ and $\tau^{(p-1)}(\phi)$ does not vanish identically.
\end{enumerate}
\end{definition}

\begin{theorem}\cite{Gud-Sob-1}\label{theorem-p-harmonic}
Let $\phi:(M,g)\to\cn$ be a complex-valued function on a Riemannian manifold and $(\lambda,\mu)\in\cn^2\setminus\{0\}$ be such that the tension field $\tau$ and the conformality operator $\kappa$ satisfy 
$$\tau(\phi)=\lambda\cdot \phi\ \ \text{and}\ \ \kappa(\phi,\phi)=\mu\cdot\phi^2.$$
Then for any positive natural number $p$, the non-vanishing function 
$$\Phi_p:W=\{ x\in M \,\mid\, \phi(x) \not\in (-\infty,0] \}\to\cn$$ with 
$$\Phi_p(x)= 
\begin{cases}
c_1\cdot\log(\phi(x))^{p-1}, 							& \text{if }\; \mu = 0, \; \lambda \not= 0\\[0.2cm]	c_1\cdot\log(\phi(x))^{2p-1}+ c_{2}\cdot\log(\phi(x))^{2p-2}, 								& \text{if }\; \mu \not= 0, \; \lambda = \mu\\[0.2cm]
c_{1}\cdot\phi(x)^{1-\frac\lambda{\mu}}\log(\phi(x))^{p-1} + c_{2}\cdot\log(\phi(x))^{p-1},	& \text{if }\; \mu \not= 0, \; \lambda \not= \mu
\end{cases}
$$ 
is a proper $p$-harmonic function.  Here $c_1,c_2$ are complex coefficients, not both zero.
\end{theorem}

\begin{proof}
A proof of Theorem \ref{theorem-p-harmonic} can be found in \cite{Gud-Sob-1}.
\end{proof}

\section{The General Linear Group $\GLC n$}
\label{section-GLC}

In this section we will now turn our attention to the concrete Riemannian matrix Lie groups embedded as subgroups of the complex general linear group.
\medskip

The group of linear automorphism of $\cn^n$ is the complex general linear group $\GLC n=\{ z\in\cn^{n\times n}\,|\, \det z\neq 0\}$ of invertible $n\times n$ matrices with its standard representation 
$$z\mapsto
\begin{bmatrix}
	z_{11} & \cdots & z_{1n} \\
	\vdots & \ddots & \vdots \\
	z_{n1} & \cdots & z_{nn}
\end{bmatrix}.
$$
Its Lie algebra $\glc n$ of left-invariant vector fields on $\GLC n$ can be identified with $\cn^{n\times n}$ i.e. the complex linear space of $n\times n$ matrices.  We equip $\GLC n$ with its natural left-invariant Riemannian metric $g$ induced by the standard Euclidean inner product $\glc n\times\glc n\to\rn$ on its Lie algebra $\glc n$ satisfying
$$g(Z,W)\mapsto \Re\,\trace\, (Z\cdot \bar W^t).$$ 
For $1\le i,j\le n$, we shall by $E_{ij}$ denote the element of $\rn^{n\times n}$ satisfying
$$(E_{ij})_{kl}=\delta_{ik}\delta_{jl}$$ and by $D_t$ the diagonal
matrices $D_t=E_{tt}.$ For $1\le r<s\le n$, let $X_{rs}$ and
$Y_{rs}$ be the matrices satisfying
$$X_{rs}=\frac 1{\sqrt 2}(E_{rs}+E_{sr}),\ \ Y_{rs}=\frac
1{\sqrt 2}(E_{rs}-E_{sr}).$$
For the real vector space $\glc n$ we then have the canonical orthonormal basis $\B^\cn=\B\cup i\B$, where 
$$\B=\{Y_{rs}, X_{rs}\,|\, 1\le r<s\le n\}\cup\{D_{t}\,|\, t=1,2,\dots,n\}.$$
\vskip .1cm

Let $G$ be a classical Lie subgroup of $\GLC n$ with Lie algebra $\g$ inheriting the induced left-invariant Riemannian metric, which we shall also denote by $g$.  In the cases considered in this paper, $\B_{\g}=\B^\cn\cap\g$ will be an orthornormal basis for the subalgebra $\g$ of $\glc n$.  By employing the Koszul formula for the Levi-Civita connection $\nabla$ on $(G,g)$, we see that for all $Z,W\in\B_{\g}$ we have
\begin{eqnarray*}
	g(\nab ZZ,W)&=&g([W,Z],Z)\\
	&=&\Re\,\trace\, ((WZ-ZW)\bar Z^t)\\
	&=&\Re\,\trace\, (W(Z\bar Z^t-\bar Z^tZ))\\
	&=&0.
\end{eqnarray*}

If $Z\in\g$ is a left-invariant vector field on $G$ and $\phi:U\to\cn$ is a local complex-valued function on $G$ then the $k$-th order derivatives $Z^k(\phi)$ satisfy
\begin{equation*}\label{equation-diff-Z}
	Z^k(\phi)(p)=\frac {d^k}{ds^k}\bigl(\phi(p\cdot\exp(sZ))\bigr)\Big|_{s=0},
\end{equation*}
\begin{equation*}\label{equation-diff-Z-bar}
	Z^k(\bar\phi)(p)=\frac {d^k}{ds^k}\bigl(\overline{\phi(p\cdot\exp(s Z))}\bigr)\Big|_{s=0}.
\end{equation*}
\smallskip 
\noindent
This implies that the tension field $\tau$ and the conformality operator $\kappa$ on $G$ fulfill 
\begin{equation*}\label{equation-tau}
	\tau(\phi)
	=\sum_{Z\in\B_\g}\bigl(Z^2(\phi)-\nab ZZ(\phi)\bigr)
	=\sum_{Z\in\B_\g}Z^2(\phi),
\end{equation*}	
\begin{equation*}\label{equation-kappa}
	\kappa(\phi,\psi)=\sum_{Z\in\B_\g}Z(\phi)\cdot Z(\psi),
\end{equation*}
where $\B_\g$ is the orthonormal basis $\B^\cn\cap\g$ for the Lie algebra $\g$.

\section{The Unitary Group $\U n$}
\label{section-unitary-group}

In this section we construct new eigenfamilies of complex-valued functions on the unitary group 
$$\U n=\{z\in\cn^{n\times n}\,|\,z\cdot\bar z^t=I\}.$$ 
For its standard complex representation $\pi:\U n\to\GLC n$ on $\cn^n$ we use the notation 
$$\pi:z\mapsto
\begin{bmatrix}
	z_{11} & \cdots & z_{1n} \\
	\vdots & \ddots & \vdots \\
	z_{n1} & \cdots & z_{nn}
\end{bmatrix}.
$$
The Lie algebra $\u n$ of $\U n$ consists of the skew-Hermitian matrices i.e. 
$$\u{n}=\{Z\in\cn^{n\times n}|\ Z+\bar Z^t=0\}.$$ 
We equip $\U{n}$ with its standard biinvariant Riemannian metric $g$, induced by the Killing form of its Lie algebra $\u n$, with
$$g(Z,W)=\Re\trace(Z\cdot\bar W^t).$$ 
The canonical orthonormal basis for the Lie algebra $\u n$ is then given by 
$$\B_{\u n}=\{Y_{rs}, iX_{rs}|\ 1\le r<s\le n\}\cup\{iD_t|\ t=1,\dots ,n\}.$$

\begin{lemma}\label{lemma-Un-basic}
Let $z_{j\alpha}:\U n\to\cn$ be the matrix elements of the standard representation of the unitary group $\U n$.  Then the tension field $\tau$ and the conformality operator $\kappa$  satisfy the following relations 
$$
\tau(z_{j\alpha})=-\,n\cdot z_{j\alpha},\ \ 
\kappa(z_{j\alpha}, z_{k\beta})=\,-z_{j\beta}\,z_{k\alpha},
$$
$$
\tau(\bar z_{j\alpha})=-\,n\cdot \bar z_{j\alpha},\ \ 
\kappa(\bar z_{j\alpha}, \bar z_{k\beta})=-\,\bar z_{j\beta}\,\bar z_{k\alpha},
$$
$$
\kappa(z_{j\alpha}, \bar z_{k\beta})=\delta_{jk}\cdot\delta_{\alpha\beta}.
$$
\end{lemma}

\begin{proof}
	The first two relations were already proven in Lemma 5.1 of \cite{Gud-Sak-1}.  The next two follow simply by conjugation and the complex linearity of $\tau$ and $\kappa$. For the last statement we have 	
	\begin{eqnarray*}
		\kappa(z_{j\alpha},\bar z_{k\beta})
		&=&
		\sum_{Z\in\B_{\u n}}Z(z_{j\alpha})\cdot Z(\bar z_{k\beta})\\
		&=&
		\sum_{r<s}e_j\cdot z\cdot Y_{rs}\cdot e_\alpha^t
		\cdot e_\beta\cdot\overline{(Y_{rs}^t)}\cdot\bar z^t\cdot e_k^t\\
		& &
		+\,\sum_{r<s}e_j\cdot z\cdot (iX_{rs})\cdot e_\alpha^t
		\cdot e_\beta\cdot \overline{(iX_{rs}^t)}\cdot \bar z^t\cdot e_k^t\\
		& & 
		+\,\sum_{t}e_j\cdot z\cdot (iD_{t})\cdot e_\alpha^t
		\cdot e_\beta\cdot \overline{(iD_{t}^t)}\cdot \bar z^t\cdot e_k^t\\
		&=&
		e_j\cdot z\cdot\big(\sum_{r<s}Y_{rs}\cdot 
		E_{\alpha\beta}\cdot Y_{rs}^t\big)\cdot\bar z^t\cdot e_k^t\\
		& &
		+\,e_j\cdot z\cdot\big(\sum_{r<s}X_{rs}\cdot 
		E_{\alpha\beta}\cdot X_{rs}^t\big)\cdot\bar z^t\cdot e_k^t\\
		& &
		+\,e_j\cdot z\cdot\big(\sum_{t}D_{t}\cdot 
		E_{\alpha\beta}\cdot D_{t}^t\big)\cdot\bar z^t\cdot e_k^t\\
		&=&
		e_j\cdot z\cdot \delta_{\alpha\beta}\cdot I\cdot\bar z^t\cdot e_k^t\\
		&=&
		\delta_{\alpha\beta}\cdot\delta_{jk}.
	\end{eqnarray*}
\end{proof}

\begin{proposition}\label{proposition-tau-kappa}
Let $z_{j\alpha}:\U n\to\cn$ be the matrix elements of the standard representation of the unitary group $\U n$.  If $j\neq k$, then 
$$\hat\E_{jk}=\{z_{j\alpha}\cdot\bar z_{k\beta}:\U n\to\cn\, |\, 1\le\alpha,\beta\le n\}$$
is an eigenfamily on $\U n$ such that 
$$\tau(\hat\phi)=\,-2\,n\cdot\hat\phi\ \ \text{and}\ \ \kappa(\hat\phi,\hat\psi)=\,-2\cdot\hat\phi\,\hat\psi$$ for all $\hat\phi,\hat\psi\in\hat\E_{jk}$.
\end{proposition}
	
\begin{proof}
Employing Lemma \ref{lemma-Un-basic}, the basic relation (\ref{equation-basic}) and the fact that $j\neq k$ we yield the following
\begin{eqnarray*}
	\tau\bigl( z_{j\alpha}\, \bar z_{k\beta}\bigr)
	&=&\tau(z_{j\alpha})\cdot \bar z_{k\beta}+2\cdot\kappa(z_{j\alpha},\bar z_{k\beta})+z_{j\alpha}\cdot \tau (\bar z_{k\beta})\\
	&=&-2\,n\cdot z_{j\alpha}\, \bar z_{k\beta}
\end{eqnarray*}	
and
\begin{eqnarray*}
\kappa\big(z_{j\alpha}\,\bar z_{k\beta},z_{j\gamma}\, \bar z_{k\delta}\big)
&=&z_{j\alpha}\, z_{j\gamma}\cdot\kappa\big(\bar z_{k\beta}, \bar z_{k\delta}\big)+z_{j\alpha}\,\bar z_{k\delta}\cdot\kappa\big(\bar z_{k\beta},z_{j\gamma}\, \big)\\
&&\ +\,\bar z_{k\beta}\,\bar z_{k\delta}\cdot \kappa\big(z_{j\alpha},z_{j\gamma}\big)+
\bar z_{k\beta}\,z_{j\gamma}\cdot
\kappa\big(z_{j\alpha},\bar z_{k\delta}\big)\\
&=&-\,2\cdot 
(z_{j\alpha}\, \bar z_{k\beta})
(z_{j\gamma}\, \bar z_{k\delta}).
	\end{eqnarray*}
\end{proof}

\section{Lifting Properties}

We shall now present an interesting connection between the theory of harmonic morphisms and complex-valued $p$-harmonic functions.

\begin{proposition}\label{proposition-lift}
Let $\pi:(\hat M,\hat g)\to (M,g)$ be a Riemannian submersion between Riemannian manifolds. Further let $f:(M,g)\to\C$ be a smooth function and $\hat f:(\hat M,\hat g)\to\C$ be the composition $\hat f=f\circ\pi$. Then the tension fields $\tau$ and $\hat\tau$  satisfy
$$\tau(f)\circ\pi=\hat\tau(\hat f)\ \ \text{and}\ \ \tau^p(f)\circ\pi=\hat\tau^{p}(\hat f)$$
for all positive integers $p\ge 2$.
\end{proposition}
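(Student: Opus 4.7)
The plan is to establish the base case $\tau(f)\circ\pi=\lambda^{-2}\hat\tau(\hat f)$ first, and then obtain the iterated formula by recursion. The base case is the classical behaviour of a harmonic morphism under composition; the iteration is then immediate.

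For the base case I would invoke the composition formula for the tension field of a map: for any smooth map $\pi:(\hat M,\hat g)\to(M,g)$ and any smooth $f:M\to\cn$,
\begin{equation*}
\hat\tau(f\circ\pi)\;=\;df\bigl(\tau(\pi)\bigr)+\trace_{\hat g}\bigl(\nabla df(d\pi,d\pi)\bigr).
\end{equation*}
Since $\pi$ is a harmonic morphism it is in particular a harmonic map, so $\tau(\pi)=0$ and the first term drops. For the second term, fix $\hat x\in\hat M$ and choose an orthonormal basis $\{e_1,\dots,e_{\hat m}\}$ of $T_{\hat x}\hat M$ adapted to the horizontal/vertical splitting induced by the submersion $\pi$, with $e_1,\dots,e_m$ horizontal. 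Horizontal (weak) conformality with dilation $\lambda$ says $d\pi$ kills the vertical vectors and that $\{\lambda^{-1}d\pi(e_1),\dots,\lambda^{-1}d\pi(e_m)\}$ is orthonormal in $T_{\pi(\hat x)}M$. Hence
\begin{equation*}
\sum_{\alpha=1}^{\hat m}\nabla df\bigl(d\pi(e_\alpha),d\pi(e_\alpha)\bigr)\;=\;\lambda^2\sum_{i=1}^{m}\nabla df\bigl(\lambda^{-1}d\pi(e_i),\lambda^{-1}d\pi(e_i)\bigr)\;=\;\lambda^2\,\tau(f)\bigl(\pi(\hat x)\bigr).
\end{equation*}
Combining gives $\hat\tau(\hat f)=\lambda^2\cdot(\tau(f)\circ\pi)$, i.e.\ the desired identity $\tau(f)\circ\pi=\lambda^{-2}\hat\tau(\hat f)$.

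Step two is iteration. Having proved the base identity for an arbitrary smooth $f$, I apply it with $f$ replaced by $F:=\tau^{(p-1)}(f)$, which is still a smooth function on $M$. This yields
\begin{equation*}
\tau^p(f)\circ\pi\;=\;\tau(F)\circ\pi\;=\;\lambda^{-2}\hat\tau\bigl(F\circ\pi\bigr)\;=\;\lambda^{-2}\hat\tau\bigl(\tau^{(p-1)}(f)\circ\pi\bigr).
\end{equation*}
Unwinding $\tau^{(p-1)}(f)\circ\pi$ one step further by the same identity gives the nested expression $\lambda^{-2}\hat\tau\bigl(\lambda^{-2}\hat\tau^{(p-1)}(\hat f)\bigr)$ displayed in the statement, and a straightforward induction on $p$ completes the argument.

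The only genuine content is the base step; the main thing to get right there is the horizontal/vertical bookkeeping in the trace, in particular that the vertical contributions vanish (because $d\pi$ kills them) and that the horizontal ones pick up exactly a factor $\lambda^2$ from the conformality condition. The iteration then costs nothing beyond reapplying the base identity, so I do not expect any real obstacle.
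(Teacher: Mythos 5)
Your argument is correct and coincides with the paper's own proof: both establish the base identity from the composition law for the tension field together with the harmonicity and horizontal conformality of $\pi$ (your orthonormal-frame computation of the trace is just a more explicit version of the paper's one-line appeal to the dilation), and both then iterate by applying that identity to $\tau^{(p-1)}(f)$. The only imprecision, which you share with the paper, is in the final unwinding: the recursion actually produces the fully nested expression $(\lambda^{-2}\hat\tau)\bigl(\cdots(\lambda^{-2}\hat\tau)(\hat f)\cdots\bigr)$, which agrees with the displayed $\lambda^{-2}\hat\tau\bigl(\lambda^{-2}\hat\tau^{(p-1)}(\hat f)\bigr)$ for $p\ge 3$ only when the dilation $\lambda$ is constant --- precisely the hypothesis of the corollary in which the formula is used.
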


\begin{proof}
The Riemannian submersion $\pi:(\hat M,\hat g)\to (M,g)$ is a harmonic morphism i.e. a horizontally conformal, harmonic map with constant dilation $\lambda=1$. Hence the well-known composition law for the tension field gives
\begin{eqnarray*}
\hat\tau(\hat f)
&=&\hat\tau(f\circ\pi)\\
&=&\text{trace}\nabla df(d\pi,d\pi)+ df(\hat\tau(\pi))\\
&=&\tau(f)\circ\pi+df(\hat\tau(\pi))\\
&=&\tau(f)\circ\pi.
\end{eqnarray*}
The rest follows by induction.
\end{proof}

In the sequel, we shall apply the following immediate consequence of Proposition \ref{proposition-lift}.

\begin{corollary}\label{corollary-lift-proper-p-harmonic}
Let $\pi:(\hat M,\hat g)\to (M,g)$ be a Riemannian submersion.  Further let $\phi:(M,g)\to\C$ be a smooth function and $\hat \phi:(\hat M,\hat g)\to\C$ be the composition $\hat \phi=\phi\circ\pi$. Then the following statements are equivalent
	\begin{enumerate}
		\item[(a)] $\hat \phi:(\hat M,\hat g)\to\C$ is proper $p$-harmonic,
		\item[(b)] $\phi:(M,g)\to\C$ is proper $p$-harmonic.
	\end{enumerate}
\end{corollary}

\section{The Complex Grassmannians}
\label{section-complex-grassmann}

In this section we construct explicit eigenfamilies on the complex Grassmannians $G_m(\cn^{m+n})=\U{m+n}/\U {m}\times\U{n}$ of $m$-dimensional complex subspaces of $\cn^{m+n}$. Here we make use of the fact that the natural projection 
$$\pi:\U{m+n}\to\U{m+n}/\U m\times\U n$$ 
is a Riemannian submersion with totally geodesic fibres and hence a harmonic morphism.
\medskip

For $1\le j< k\le m+n$, we now define the complex-valued functions $\hat \phi_{jk}:\U{m+n}\to\cn$ on the unitary group by
$$\hat\phi_{jk}(z)=\sum_{t=1}^{m}z_{jt}\, \bar z_{k t}.$$

\begin{lemma}\label{lemma-phi-phi-complex}
The tension field $\hat\tau$ and the conformality operator $\hat\kappa$ on the unitary group $\U{m+n}$ satisfy
$$\hat\tau(\hat\phi_{jk})
=-\,2\,(m+n)\cdot\hat\phi_{jk}\ \ \text{and}\ \ 
\hat\kappa(\hat\phi_{jk},\hat\phi_{lm})
=-\,2\,\hat\phi_{jk}\,\hat\phi_{lm}.$$
\end{lemma}

\begin{proof}
The statement follows from an easy calculation employing Lemma \ref{lemma-Un-basic}.
\end{proof}

The functions $\hat\phi_{jk}$ are all $\U{m}\times \U{n}$-invariant and hence they induce functions 
$\phi_{jk}:\U{m+n}/\U m\times\U n\to\cn$
on the compact quotient space.

\begin{theorem}
For a fixed natural number $1\le \alpha < m+n$, the set $$\E_\alpha=\{\phi_{jk}:G_m(\cn^{m+n})\to\cn\,|\,1\le j\le\alpha<k\le m+n \}$$
is an eigenfamily on the complex Grassmannian $G_m(\cn^{m+n})$ such that  the tension field $\tau$ and the conformality operator $\kappa$ satisfy
$$\tau(\phi)=-\,2(m+n)\cdot\phi\ \ \text{and}\ \ \kappa(\phi,\psi)=-\,2\cdot \phi\,\psi$$
for all $\phi,\psi\in\E_\alpha$.
\end{theorem}

\begin{proof}
The statement follows immediately from Lemma \ref{lemma-phi-phi-complex} and Corollary \ref{corollary-lift-proper-p-harmonic}.
\end{proof}

\begin{remark}\label{remark-complex}
For $t>2$ and positive integers $n,n_1,n_2,\dots,n_t\in\zn^+$ with  
$n=n_1+n_2+\cdots +n_t$, let $\F_\cn(n_1,\dots,n_t)$ denote the homogeneous complex flag manifold with 
$$\F_\cn(n_1,\dots,n_t)=\U{n}/\U {n_1}\times\U{n_2}\times\cdots\times\U{n_t},$$ 
Then the standard Riemannian metric on the unitary  group $\U{n}$ induces a natural metric on the complex homogeneous flag manifolds and for these we have the Riemannian fibrations
$$\U{n}\to\F_\cn(n_1,\dots,n_t)\to G_{n_s}(\cn^{n}).$$ 
Here $G_{n_s}(\cn^{n})$ is the complex Grassmannian of $n_s$-dimensional complex subspaces of $\cn^{n}$, for each  $1\le s\le t$. Then a generic element $z\in\U n$ can be written of the form
$$z=[\,z_1\,|\,z_2\,|\,\cdots\, |\,z_t\,],$$
where each $z_s$ is an $n\times n_s$ submatrix of $z$. Following the above we can now, for each block, construct a collection $\hat\E_s$ of $\U {n_s}$-invariant complex-valued eigenfunctions on the unitary group $\U n$ such that for all $\hat\phi_s\in\hat\E_s$
$$\tau (\hat\phi_s)=-\,2\,n\cdot\hat\phi_s\ \ \text{and}\ \ \kappa(\hat\phi_s,\hat\phi_s)=-\,2\cdot\hat\phi_s^{\, 2}.$$
According to Theorem \ref{theorem-p-harmonic},  each function 
$$\hat\Phi_{p,s}(z)=(c_{1,s}+c_{2,s}\cdot\hat\phi_s(z)^{1-n})\cdot\log(\hat\phi_s(z))^{p-1}$$
is proper $p$-harmonic on an open and dense subset of $\U n$.  The sum 
$$\hat\Phi_p=\sum_{s=1}^t\hat\Phi_{p,s}$$ constitutes a multi-dimensional family $\hat\A_p$ of $\U{n_1}\times\cdots\times\U{n_t}$-invariant proper $p$-harmonic functions on an open dense subset of $\U n$.  Furthermore, each element $\hat\Phi_p\in\hat\A_p$ induces a proper $p$-harmonic function $\Phi_p$ defined on an open and dense subset of the complex flag manifold 
$$\F_\cn (n_1,\dots ,n_t)=\U n/\U{n_1}\times\cdots\times\U{n_t},$$ 
which does not descend onto any of the complex Grassmannians since $t> 2 $.
\end{remark}

\section{The Quaternionic Unitary Group $\Sp n$}
\label{section-quaternionic-unitary-group}

In this section we consider the quaternionic unitary group $\Sp n$.  Its standard complex representation $\pi:\Sp n\to\U {2n}$ on $\cn^{2n}$ is given by
$$
\pi:(z+jw)\mapsto g=\begin{bmatrix}
	z_{11}    &  \dots & z_{1n}   & w_{11}    & \dots  & w_{1n}\\
	\vdots    & \ddots & \vdots      & \vdots    & \ddots & \vdots  \\
	z_{n1} & \dots  & z_{nn} & w_{n1} & \dots  & w_{nn}\\
	-\bar w_{11} &  \dots   & -\bar w_{1n}   & \bar z_{11}  & \dots  & \bar z_{1n}\\
	\vdots       & \ddots   & \vdots      & \vdots    & \ddots & \vdots  \\
	-\bar w_{n1} & \dots & -\bar w_{nn} & \bar z_{n1} & \dots  & \bar z_{nn}\\
\end{bmatrix}.
$$
The Lie algebra $\sp n$ of $\Sp n$ satisfies
$$\sp{n}=\{\begin{pmatrix} Z & W
	\\ -\bar W & \bar Z\end{pmatrix}\in\cn^{2n\times 2n}
\ |\ Z^*+Z=0,\ W^t-W=0\}$$ and for this we have the standard
orthonormal basis 
\begin{eqnarray*}
	\B&=&
	\Big\{\frac{1}{\sqrt{2}}
	\begin{bmatrix}
		0 & iX_{rs}\\
		iX_{rs}& 0
	\end{bmatrix}, 
	\frac{1}{\sqrt{2}}
	\begin{bmatrix}
		0 & X_{rs}\\
		-X_{rs}& 0
	\end{bmatrix}, 
	\frac{1}{\sqrt{2}}
	\begin{bmatrix}
		iX_{rs}& 0\\
		0 & -iX_{rs}
	\end{bmatrix},\\
	& &
	\qquad
	\frac{ 1}{\sqrt{2}}
	\begin{bmatrix}
		Y_{rs}& 0\\
		0 & Y_{rs}
	\end{bmatrix},
	\frac{1}{\sqrt{2}}
	\begin{bmatrix}
		0 & D_t\\
		-D_t & 0
	\end{bmatrix}, 
	\frac{1}{\sqrt{2}}
	\begin{bmatrix}
		i D_t & 0\\
		0 & -i D_t
	\end{bmatrix},\\
	& &
	\qquad\qquad
	\frac{1}{\sqrt{2}}
	\begin{bmatrix}
		0 & iD_t\\
		i D_t & 0
	\end{bmatrix}
	\Big|\, 1\leq r<s\leq n,\ 1 \leq t\leq n\Big\}\,.
\end{eqnarray*}

\begin{lemma}\label{lemma-Spn-basic}
For $1\le j,\alpha\le n$, let $z_{j\alpha}, w_{j\alpha}:\Sp n\to\cn$ be the complex-valued matrix elements of the standard representation of the quaternionic unitary group $\Sp n$. Then the tension field $\tau$ and the conformality operator $\kappa$ on $\Sp n$ satisfy the following relations
$$
\tau(z_{j\alpha})= -\,\tfrac{2n+1}2\cdot z_{j\alpha},\ \ \tau(w_{j\alpha})= -\,\tfrac{2n+1}2\cdot w_{j\alpha},
$$
$$
\kappa(z_{j\alpha},z_{k\beta})
=-\,\tfrac 12\cdot z_{j\beta}\,z_{k\alpha},\ \
\kappa(w_{j\alpha},w_{k\beta})
=-\,\tfrac 12\cdot w_{j\beta}\,w_{k\alpha},
$$	
$$
\kappa(z_{j\alpha},w_{k\beta})
=-\,\tfrac 12\cdot w_{j\beta}\,z_{k\alpha},$$
$$\tau(\bar z_{j\alpha})= -\,\tfrac{2n+1}2\cdot \bar z_{j\alpha},\ \ \tau(\bar w_{j\alpha})= -\,\tfrac{2n+1}2\cdot \bar w_{j\alpha},
$$
$$
\kappa(\bar z_{j\alpha},\bar z_{k\beta})
=-\,\tfrac 12\cdot\bar z_{j\beta}\,\bar z_{k\alpha},\ \
\kappa(\bar w_{j\alpha},\bar w_{k\beta})
=-\,\tfrac 12\cdot\bar w_{j\beta}\,\bar w_{k\alpha},
$$	
$$
\kappa(\bar z_{j\alpha},\bar w_{k\beta})
=-\,\tfrac 12\cdot\bar  w_{j\beta}\,\bar z_{k\alpha},
$$
$$
\kappa(z_{j\alpha},\bar z_{k\beta})
=\tfrac 12\cdot(w_{j\beta}\,\bar  w_{k\alpha}+\delta_{jk}\delta_{\alpha\beta}),\ \ 
\kappa( z_{j\alpha},\bar w_{k\beta})
=-\tfrac 12 z_{j\beta}\,\bar w_{k\alpha},
$$
$$
\kappa( w_{j\alpha},\bar z_{k\beta})
= -\tfrac 12w_{j\beta}\,\bar z_{k\alpha},
\ \ \kappa( w_{j\alpha},\bar w_{k\beta})
=\tfrac 12\cdot(z_{j\beta}\,\bar z_{k\alpha}+\delta_{jk}\delta_{\alpha\beta}).
$$
\end{lemma}

\begin{proof}
The result can be obtained by exactly the same technique as that of Lemma \ref{lemma-Un-basic}.
\end{proof}

\begin{theorem}
For $1\le j<k\le n$, let $z_{j\alpha},w_{j\alpha}:\Sp n\to\cn$ be the matrix elements of the standard representation of $\Sp n$ and define the complex-valued  $\hat\phi_{jk}^\alpha:\Sp n\to\cn$ with  
$$\hat\phi_{jk}^\alpha
=z_{j\alpha}\,\bar z_{k\alpha}
+w_{j\alpha}\,\bar w_{k\alpha}.$$
Then 
$\hat\E_{jk}=\{\hat\phi_{jk}^\alpha:\Sp n\to\cn\,|\, 1\le\alpha\le n\}$
is an eigenfamily on $\Sp n$ such that the tension field $\hat\tau$ and the conformality operator $\hat\kappa$ satisfy 
$$\hat\tau(\hat\phi)=-2n\cdot \hat\phi\ \ \text{and}\ \ 
\hat\kappa(\hat\phi,\hat\psi)
=-\,\hat\phi\,\hat\psi,$$
for all $\hat\phi,\hat\psi\in\hat\E_{jk}$.
\end{theorem}

\begin{proof}
The statement follows from a standard computation applying Lemma \ref{lemma-Spn-basic} and the basic relation (\ref{equation-basic}).
\end{proof}

\section{The Quaternionic Grassmannians}
\label{section-quaternionic-grassmann}

In this section we construct explicit eigenfamilies on the quaternionic Grassmannians $G_m(\hn^{m+n})=\Sp{m+n}/\Sp {m}\times\Sp{n}$ of $m$-dimensional quaternionic subspaces of $\hn^{m+n}$. Here we make use of the fact that the natural projection 
$$\pi:\Sp{m+n}\to\Sp{m+n}/\Sp m\times\Sp n$$ 
is a Riemannian submersion with totally geodesic fibres and hence a harmonic morphism.
\medskip

For $1\le j< \alpha\le m+n$, we now define the complex-valued functions $\hat \phi_{j\alpha}:\Sp{m+n}\to\cn$ on the unitary group by
$$\hat\phi_{j\alpha}(z)=\sum_{t=1}^{m}(z_{jt}\,\bar z_{\alpha t}+w_{jt}\,\bar w_{\alpha t}).$$

\begin{lemma}\label{lemma-phi-phi-quaternionic}
The tension field $\hat\tau$ and the conformality operator $\hat\kappa$ on the quaternionic unitary group $\Sp{m+n}$ satisfy
$$\hat\tau(\hat\phi_{j\alpha})=-\,2\,(m+n)\cdot\hat\phi_{j\alpha}\ \ \text{and}\ \ \hat\kappa(\hat\phi_{j\alpha},\hat\phi_{k\alpha})
=-\,\hat\phi_{j\alpha}\,\hat\phi_{k\alpha},$$
where $j,k\neq\alpha$.
\end{lemma}

\begin{proof}
The statement follows from an easy calculation employing Lemma \ref{lemma-Spn-basic}.
\end{proof}

The functions $\hat\phi_{j\alpha}$ are all $\Sp{m}\times \Sp{n}$-invariant and hence they induce functions 
$\phi_{j\alpha}:\Sp{m+n}/\Sp m\times\Sp n\to\cn$
on the compact quotient space.

\begin{theorem}
For a fixed natural number $1\le r < m+n$, the set $$\E_r=\{\phi_{j\alpha}:G_m(\hn^{m+n})\to\cn\,|\,1\le j\le r<\alpha\le m+n \}$$
is an eigenfamily on the quaternionic Grassmannian $G_m(\hn^{m+n})$ such that  the tension field $\tau$ and the conformality operator $\kappa$ satisfy
$$\tau(\phi)=-\,2(m+n)\cdot\phi\ \ \text{and}\ \ \kappa(\phi,\psi)=-\,\phi\,\psi$$
for all $\phi,\psi\in\E_r$.
\end{theorem}

\begin{proof}
The statement follows immediately from Lemma \ref{lemma-phi-phi-quaternionic} and Corollary \ref{corollary-lift-proper-p-harmonic}.
\end{proof}

\begin{remark}\label{remark-quaternionic}
Employing the same procedure as in Remark \ref{remark-complex}, it is easy to construct proper $p$-harmonic function defined on open and dense subsets of the quaternionic flag manifold 
$$\F_\hn (n_1,\dots ,n_t)=\Sp n/\Sp{n_1}\times\cdots\times\Sp{n_t},$$ 
which does not descend onto any of the quaternionic  Grassmannians if $t\ge 3$.
\end{remark}

\section{The Unitary Group $\U{m+n}$ Revisited}
\label{section-unitary}

Here we construct new eigenfamilies on the unitary group $\U {m+n}$ as the compact subgroup 
$$\U {m+n}=\{z\in\GLC{m+n}|\ z\cdot \bar z^t=I_{m+n}\}$$
of the complex general linear group $\GLC{m+n}$ of invertible $(m+n)\times (m+n)$ matrices with standard matrix representation
$$
z=\begin{bmatrix}
	z_{11} & z_{12} & \cdots & z_{1,m+n}\\
	z_{21} & z_{22} & \cdots & z_{2,m+n}\\
	\vdots & \vdots & \ddots & \vdots \\
	z_{m+n,1} & z_{m+n,2} & \cdots & z_{m+n,m+n}
\end{bmatrix}.
$$

The Lie algebra $\u{m+n}$ of left-invariant vector fields on $\U{m+n}$, can be identified with skew-Hermitian matrices in $\cn^{(m+n)\times (m+n)}$, which we equip with the standard left-invariant Riemannian metric $g$ such that for all $Z,W\in\u{m+n}$ we have $$g(Z,W)=\Re\trace (Z\cdot \bar W^t).$$

For this we have the following fundamental ingredient for our recipe. 

\begin{lemma}\cite{Gud-Sak-1}\label{lemma-Un}
For $1\le j,\alpha\le m+n$, let $z_{j\alpha}:\U {m+n}\to\cn$ be the complex-valued matrix elements of the standard representation of $\U {m+n}$. Then the tension field $\hat\tau$ and the conformality operator $\hat\kappa$ on $\U{m+n}$ satisfy the following relations
\begin{equation*}
\hat\tau(z_{j\alpha})= -\,(m+n)\cdot z_{j\alpha}\ \ \text{and}\ \ \hat\kappa(z_{j\alpha},z_{k\beta})= -\,z_{j\beta}\, z_{k\alpha}.
\end{equation*}
\end{lemma}
\medskip

Let $\Pi_{m,n}$ denote the set of permutations $\pi=(r_1,r_2,\dots ,r_m)$ such that $1\le r_1<r_2<\dots <r_m\le m+n$ and $\hat\phi_\pi:\U{m+n}\to\cn$ be the determinant of following $m\times m$ submatrix of $z$  associated with $\pi$  
$$\hat\phi_\pi(z)=\det
\begin{bmatrix}
z_{r_1,1}  & \dots  & z_{r_1,m}\\
\vdots     & \ddots & \vdots   \\
z_{r_m,1}  & \dots & z_{r_m,m}
\end{bmatrix}.$$
By ${\bf P}(m+n,m)$ we denote the number of such permutations.
\medskip

We are now ready to present the main results of this section.  This provides a new collection of eigenfamilies $\hat\E_{m,n}$  of complex-valued functions on the unitary group $\U{m+n}$.

\begin{theorem}\label{theorem-main-Un}
The set $\hat\E_m=\{\hat\phi_\pi\,|\, \pi\in\Pi_{m,n}\}$ of complex-valued functions is an eigenfamily on the unitary group $\U{m+n}$, such that the tension field $\hat\tau$ and the conformality operator $\hat\kappa$ satisfy
$$\tau (\hat\phi)=-\,m(n+1)\cdot\hat\phi\ \ \text{and}\ \ \kappa(\hat\phi,\hat\psi)=-\,m\cdot\hat\phi\cdot\hat\psi,$$ 
for all $\hat\phi,\hat\psi\in\hat\E_m$.
\end{theorem}

\begin{proof}
	Let $\hat\phi$ and $\hat\psi$ be two elements of $\hat\E_m$ of the form
	$$
	\hat\phi(z)=\det
	\begin{bmatrix}
		z_{r_1,1}  & \dots  & z_{r_1,m}\\
		\vdots     & \ddots & \vdots   \\
		z_{r_m,1}  & \dots & z_{r_m,m}
	\end{bmatrix},\ \ 
	\hat\psi(z)=\det
	\begin{bmatrix}
		z_{s_1,1}  & \dots  & z_{s_1,m}\\
		\vdots     & \ddots & \vdots   \\
		z_{s_m,1}  & \dots & z_{s_m,m}
	\end{bmatrix}.
	$$
	Using the Levi-Civita symbol $\varepsilon_{i_1\dots i_m}$ we have the standard expression for the determinants 
	$$
	\hat\phi(z)=\sum_{i_1\dots i_m=r_1}^{r_m}\varepsilon_{i_1\dots i_m}\cdot z_{i_1,1}\dots z_{i_m,m}$$
and
$$	
	\hat{\psi}(z)=\sum_{j_1,\dots, j_m=s_1}^{s_m}\varepsilon_{j_1\dots j_m}\cdot z_{j_1,1}\dots z_{j_m,m}\,.
	$$
	Then applying the tension field to the function $\hat\phi$ we yield
	\begin{eqnarray*}
		\tau(\hat{\phi})&=&\sum\limits_{i_1\dots i_m=r_1}^{r_m}\varepsilon_{i_1\dots i_m}\cdot \\
		& &
		\big\{\tau(z_{i_1,1})\cdot z_{i_2,2}\dots z_{i_m,m}+\cdots  +z_{i_1,1}\cdots z_{i_{m-1}m-1}\cdot\tau(z_{i_m,m})\\
		& &\qquad
		+2\cdot [\kappa(z_{i_1,1},z_{i_2,2})\cdot z_{i_3,3}\cdots z_{i_m,m}+\cdots + \\
		& &\qquad\qquad\qquad\, z_{i_1,1}\cdots z_{i_{m-2},{m-2}}
		\cdot\kappa(z_{i_{m-1},{m-1}},z_{i_m,m})]\big\}.
	\end{eqnarray*}
	By now using the relations
	\begin{equation*}
		\hat\tau(z_{j\alpha})= \lambda\cdot z_{j\alpha}\ \ \text{and}\ \ \hat\kappa(z_{j\alpha},z_{k\beta})= \mu\cdot  z_{j\beta}z_{k\alpha}\,,
	\end{equation*}
	we obtain
	\begin{eqnarray*}
		\tau(\hat{\phi})&=&\lambda\cdot m\cdot\hat{\phi}\\
		& &+\,2\,\mu\cdot\sum\limits_{i_1\dots i_m=r_1}^{r_m}\varepsilon_{i_1\dots i_m}\cdot\left\{z_{i_1,2}\cdot z_{i_2,1}\cdot z_{i_3,3}\cdots z_{i_m,m}+\cdots+\right.\\
		& &\qquad\qquad\qquad\qquad z_{i_1,1}\dots z_{i_{m-2},{m-2}}\cdot z_{i_{m-1},{m}}\cdot z_{i_m,{m-1}}\left.\right\}.
	\end{eqnarray*}
	We note that
	$\varepsilon_{i_1\dots i_l\dots i_k\dots i_m}=-\,\varepsilon_{i_1\dots i_k\dots i_l\dots i_m}$, for $1\leq l,k\leq m$, so 
	\begin{eqnarray*}
		\tau(\hat{\phi})&=&-\,(m+n)\cdot m\cdot\hat{\phi}\\
		& &+\,2\sum\limits_{i_1\dots i_m=r_1}^{r_m}\varepsilon_{i_2 i_1\dots i_m}\cdot z_{i_2,1}\cdot z_{i_1,2}\cdot z_{i_3,3}\cdots z_{i_m,m}+\cdots+\\
		& &\qquad\varepsilon_{i_1\dots i_{m-2} i_{m} i_{m-1}}\cdot z_{i_{1},1}\dots z_{i_{m-2},{m-2}}\cdot z_{i_m,{m-1}}\cdot z_{i_{m-1},{m}}\left.\right\}\,
	\end{eqnarray*}
	and now by just changing the indices we yield
	
	$$\tau(\hat{\phi})=-\,(m+n)\cdot m\cdot \hat{\phi}+2\cdot\binom m2\cdot \hat{\phi}\\
	=-\,m\cdot (n+1)\cdot\hat{\phi}.
	$$
	\medskip
	
	The conformality operator satisfies
	\begin{eqnarray*}
		\kappa(\hat{\phi},\hat{\psi})
		&=&\sum\limits_{i_1,\dots,i_m=r_1}^{r_m}\sum\limits_{j_1,\dots,j_m=s_1}^{s_m}\varepsilon_{i_1\dots i_m}\varepsilon_{j_1\dots j_m}\cdot\\
		& &\qquad\{\kappa(z_{i_1,1},z_{j_1,1})\cdot z_{i_2,2}\dots z_{i_m,m}\dots z_{j_2,2}\dots z_{j_m,m}\\
		& &\qquad\qquad +\,\kappa(z_{i_1,1},z_{j_2,2})\cdot z_{i_2,2}\dots z_{i_m,m}z_{j_1,1}z_{j_3,3}\dots z_{j_m,m}\\
		& &\qquad\qquad\qquad\qquad\qquad\vdots\\
		& &\qquad +\,\kappa(z_{i_m,m},z_{j_m,m})\cdot z_{i_1,1}\dots z_{i_{m-1},{m-1}}z_{j_1,1}\cdots z_{j_{m-1},{m-1}}\} \\
		&=&-\sum\limits_{i_1,\dots,i_m=r_1}^{r_m}\sum\limits_{j_1,\dots,j_m=s_1}^{s_m}\varepsilon_{i_1\dots i_m}\varepsilon_{j_1\dots j_m}\cdot\\
		& &\qquad\qquad\{z_{i_1,1}z_{i_2,2}\dots z_{i_m,m} z_{j_1,1} z_{j_2,2}\dots z_{j_m,m}\\
		& &\qquad\qquad +\, z_{i_1,2}z_{i_2,2}\dots z_{i_m,m}z_{j_1,1} z_{j_2,1} z_{j_3,3}\dots z_{j_m,m}\\	
		& &\qquad\qquad\qquad\qquad\qquad\qquad\qquad\vdots\\
		& &\qquad\qquad +z_{i_1,1}\dots z_{i_{m-1},{m-1}}z_{i_m,m-1} z_{j_1,1}\cdots z_{j_{m-1},{m}}z_{j_m,m}\\
		& &\qquad\qquad +z_{i_1,1}\dots z_{i_{m-1},{m-1}}z_{i_m,m} z_{j_1,1}\cdots z_{j_{m-1},{m-1}}z_{j_m,m}\}\, .
	\end{eqnarray*}
	The term  $z_{i_1,1}\dots z_{i_m,m}z_{j_1,1}\dots z_{j_m,m}$ appears $m$ times in the above sum, so
	\begin{eqnarray*}
		&&\kappa(\hat{\phi},\hat{\psi})\\ 
		&=&-m\cdot\hat{\phi}\cdot\hat{\psi}\\
		&&-\sum\limits_{i_1,\dots,i_m=r_1}^{r_m}\sum\limits_{j_1,\dots,j_m=s_1}^{s_m}\varepsilon_{i_1\dots i_m}\varepsilon_{j_1\dots j_m}\cdot\\ &&\qquad\qquad\qquad\qquad  z_{i_1,2}z_{i_2,2}\dots z_{i_m,m}z_{j_1,1} z_{j_2,1} z_{j_3,3}\dots z_{j_m,m}\\
		& &\qquad\qquad\qquad\qquad\qquad\vdots\\
		& &- \sum\limits_{i_1,\dots,i_m=r_1}^{r_m}\sum\limits_{j_1,\dots,j_m=s_1}^{s_m}\varepsilon_{i_1\dots i_m}\varepsilon_{j_1\dots j_m}\cdot\\
		&&\qquad\qquad\qquad\qquad z_{i_1,1}\dots z_{i_{m-1},{m-1}}z_{i_m,m-1} z_{j_1,1}\cdots z_{j_{m-1},{m}}z_{j_m,m}.
	\end{eqnarray*}
	But each term in the last equation containing a double sum vanishes. Because of the symmetry, it is enough to show this, for instance, for the term
	\begin{equation}\label{vanishing terms}
		\sum\limits_{i_1,\dots,i_m=r_1}^{r_m}\sum\limits_{j_1,\dots,j_m=s_1}^{s_m}\varepsilon_{i_1\dots i_m}\varepsilon_{j_1\dots j_m}\cdot z_{i_1,2}  z_{i_2,2}\dots z_{i_m,m}z_{j_1,1} z_{j_2,1} z_{j_3,3}\dots z_{j_m,m}.
	\end{equation}
	We have the following property
	\begin{equation}\label{det-product-epsilon}
		\varepsilon_{i_1i_2\dots i_m}\varepsilon_{j_1j_2\dots j_m}=\det
		\begin{bmatrix}
			\delta_{i_1j_1}& \delta_{i_1j_2}&\dots&\delta_{i_1j_m}\\
			\delta_{i_2j_1}& \delta_{i_2j_2}&\dots&\delta_{i_2j_m}\\
			\vdots&\vdots&\ddots&\vdots\\
			\delta_{i_mj_1}&\delta_{i_mj_2}&\dots&\delta_{i_mj_m}
		\end{bmatrix}.
	\end{equation}
	Equation \eqref{vanishing terms} can be obtained by a simple induction.
	For $m=2$ we have the following	
	\begin{eqnarray*}
		&&
		\sum\limits_{i_1,i_2=r_1}^{r_2}\sum\limits_{j_1,j_2=s_1}^{s_2}\varepsilon_{i_1 i_2}\varepsilon_{j_1 j_2}\cdot z_{i_1,2}  z_{i_2,2} z_{j_1,1} z_{j_2,1}\\
		&=& \sum\limits_{i_1,i_2=r_1}^{r_2}\sum\limits_{j_1,j_2=s_1}^{s_2}[\delta_{i_1 j_1}\delta_{i_2 j_2}-\delta_{i_1 j_2}\delta_{i_2 j_1}]\cdot z_{i_1,2}  z_{i_2,2} z_{j_1,1} z_{j_2,1}\\
		&=& 
		\sum\limits_{i_1,i_2=r_1}^{r_2}\sum\limits_{j_1,j_2=s_1}^{s_2} z_{i_1,2}z_{i_2,2}z_{i_1,1}z_{i_2,1}-z_{i_1,2}z_{i_2,2}z_{i_2,1}z_{i_1,1}\\
		&=& 0\,.
	\end{eqnarray*}
	Suppose that \eqref{vanishing terms} is true for $m-1\,.$ From \eqref{det-product-epsilon}, and by calculating the determinant according to the first line, we can write
	\begin{equation}\label{induction-}
		\varepsilon_{i_1\dots i_m}\varepsilon_{j_1\dots j_m}=\sum_{k=1}^m (-1)^{1+k}\delta_{i_1 j_k}\cdot\varepsilon_{i_2\dots i_m}\varepsilon_{j_1\dots j_{k-1}j_{k+1}\dots j_m}.
	\end{equation}
	Now by inserting \eqref{induction-} into \eqref{vanishing terms} and by a simple change of indices we obtain the result.
\end{proof}

Each element $w$ of the subgroup $\U m\times\U n$ of $\U{m+n}$ can be written as a block matrix
$$w=\begin{bmatrix} 
w_1& 0\\
0 & w_2	
\end{bmatrix},$$
where $w_1\in\U m$ and $w_2\in\U n$.  For each permutation $\pi\in\Pi_{m,n}$ the complex-valued function $\hat\phi_\pi$ satisfies  
\begin{equation}\label{equation-det}
\hat\phi_\pi(z\cdot w)=(\det w_1)\cdot\hat\phi_\pi(z),
\end{equation} 
for all $w\in\U m\times\U n$ and $z\in\U{m+n}$.

\begin{theorem}\label{theorem-complex-grass}
Let $\hat\E_{m,n}=\{\hat\phi_1,\hat\phi_2\,\dots,\hat\phi_{{\bf P}(m+n,m)}\}$ be the above eigenfamily on the unitary group $\U{m+n}$.  If $P,Q:\cn^{{\bf P}(m+n,m)}\to\cn$ are linearily independent homogeneous polynomials of the same positive degree then the quotient
$$\hat\Phi=\frac{P(\hat\phi_1,\dots ,\hat\phi_{{\bf P}(m+n,m)})}{Q(\hat\phi_1,\dots ,\hat\phi_{{\bf P}(m+n,m)})}$$ 
is a non-constant harmonic morphism on the open and dense subset
$$\Z_Q=\{p\in\U{m+n}| \ Q(\hat\phi_1(p),\dots ,\hat\phi_{{\bf P}(m+n,m)}(p))\neq 0\}.$$
This induces a non-constant harmonic morphism $\Phi:\text{\bf pr}(\Z_Q)\to\cn$ on the open and dense subset $\text{\bf pr}(\Z_Q)$ of the complex Grassmannian $$G_m(\cn^{m+n})=\U{m+n}/\U m\times\U n,$$ where $\text{\bf pr}:\U{m+n}\to G_m(\cn^{m+n})$ is the natural projection.
\end{theorem}

\begin{proof}
The fact that $\hat\Phi$ is a harmonic morphism follows immediately from Theorem \ref{theorem-rational}.  This is clearly invariant under the right action of the subgroup $\U m\times\U n$ on $\U {m+n}$ and hence induces the map $\Phi:\text{\bf pr}(\Z_Q)\to\cn$ which also is a harmonic morphism, since the natural projection is a Riemannian submersion with totally geodesic fibres.
\end{proof}

\section{The Quaternionic Unitary Group $\Sp{m+n}$ Revisited}
\label{section-quaternionic-unitary}

In this section we construct eigenfamilies on the compact quaternionic unitary group $\Sp {m+n}$ which is the intersection of the unitary group $\U{2m+2n}$ and the  representation of the quaternionic general linear group $\GLH {m+n}$ in $\cn^{2(m+n)\times 2(m+n)}$ given by
$$
(z+jw)\mapsto q=\begin{bmatrix}
	z_{11}    &  \dots & z_{1,m+n}   & -\bar w_{11}    & \dots  & -\bar w_{1,m+n}\\
	\vdots    & \ddots & \vdots      & \vdots    & \ddots & \vdots  \\
	z_{m+n,1} & \dots  & z_{m+n,m+n} & -\bar w_{m+n,1} & \dots  & -\bar w_{m+n,m+n}\\
	
	w_{11} &  \dots   & w_{1,m+n}   & \bar z_{11}  & \dots  & \bar z_{1,m+n}\\
	\vdots       & \ddots   & \vdots      & \vdots    & \ddots & \vdots  \\
	w_{m+n,1} & \dots & w_{m+n,m+n} & \bar z_{m+n,1} & \dots  & \bar z_{m+n,m+n}\\
\end{bmatrix}.
$$
\medskip

The following result was first stated in Lemma 6.1 of \cite{Gud-Mon-Rat-1}.

\begin{lemma}\label{lemma-Sp}
For the indices $1\le j\le 2(m+n)$ and $1\le\alpha\le (m+n)$ let $q_{j\alpha}:\Sp {m+n}\to\cn$ be the complex valued matrix coefficients of the standard representation of $\Sp {m+n}$. Then the tension field $\hat\tau$ and the conformality operator $\hat\kappa$ satisfy the following relations 
$$
\hat\tau(q_{j\alpha})= -\frac{2(m+n)+1}2\cdot q_{j\alpha}\ \ \text{and}\ \  \hat\kappa(q_{j\alpha},q_{k\beta})=-\frac 12\cdot q_{j\beta}q_{k\alpha}
.$$
\end{lemma}
\smallskip

Let $\Pi_{m,n}$ denote the set of permutations $\pi=(r_1,r_2,\dots ,r_{m})$ such that $1\le r_1<r_2<\dots r_m\le 2(m+n)$ and $\phi_\pi:\Sp{m+n}\to\cn$ be the determinant of the minor associated with $\pi$ i.e. 
$$\hat\phi_\pi(q)=\det
\begin{bmatrix}
	q_{r_1,1}  & \dots  & q_{r_{1},m}\\
	\vdots     & \ddots & \vdots   \\
	q_{r_m,1} & \dots  & q_{r_{m},m}
\end{bmatrix}.$$

\begin{theorem}\label{theorem-main-Spn}
	The set $\hat\E_m=\{\hat\phi_\pi\,|\, \pi\in\Pi_{m,n}\}$ of complex-valued functions is an eigenfamily on the quaternionic unitary group $\Sp{m+n}$ i.e. the tension field $\hat\tau$ and the conformality operator $\hat\kappa$ satisfy 
	$$\hat\tau (\hat\phi)=-\frac{(m^2+m+2\,n+1)}2\cdot\hat\phi\ \ \text{and}\ \ \hat\kappa(\hat\phi,\hat\psi)=-\frac m2\cdot\hat\phi\cdot\hat\psi,$$
	for all $\hat\phi,\hat\psi\in\hat\E_m$.
\end{theorem}

\begin{proof}
	The technique used for proving Theorem \ref{theorem-main-Un} works here as well.
\end{proof}

Each element $q$ of the subgroup $\Sp m\times\Sp n$ of $\Sp{m+n}$ can be written as a block matrix
$$q=\begin{bmatrix} 
	q_1& 0\\
	0 & q_2	
\end{bmatrix},$$
where $q_1\in\Sp m$ and $q_2\in\Sp n$.  For each permutation $\pi\in\Pi_{m,n}$ the complex-valued function $\hat\phi_\pi$ satisfies  
\begin{equation*}\label{equation-det-q}
	\hat\phi_\pi(p\cdot q)=(\det q_1)\cdot\hat\phi_\pi(p)=\hat\phi_\pi(p),
\end{equation*} 
for all $q\in\Sp m\times\Sp n$ and $p\in\Sp{m+n}$.  This means that each element $\hat\phi_\pi\in\hat\E_m$ induces a complex-valued function 
$$\phi_\pi:\Sp{m+n}/\Sp m\times\Sp n\to\cn$$ on the quaternionic Grassmannian.

\begin{theorem}
The set $\E_m=\{\phi_\pi\,|\, \pi\in\Pi_{m,n}\}$ of complex-valued functions is an eigenfamily on the quaternionic Grassmannian 
$$\Sp{m+n}/\Sp m\times\Sp n$$ 
i.e. the tension field $\tau$ and the conformality operator $\kappa$ satisfy 
$$\tau (\phi)=-\frac{(m^2+m+2\,n+1)}2\cdot\phi\ \ \text{and}\ \ \kappa(\phi,\psi)=-\frac m2\cdot\phi\cdot\psi,$$
for all $\phi,\psi\in\E_m$.
\end{theorem}

\begin{proof}
The statement follows directly from the fact that the natural projection $\Sp{m+n}\to\Sp{m+n}/\Sp m\times\Sp n$ is a Riemannian submersion.
\end{proof}

\section{Acknowledgements}

The authors are grateful to Fran Burstall, Adam Lindström, Thomas Munn and Marko Sobak for useful discussions on this work.

The first author would like to thank the Department of Mathematics at Lund University for its great hospitality during her time there as a postdoc.

\section{Declarations}

The authors confirm that the data supporting the findings of this study are available and not applicable.  The authors have no relevant financial or non-financial interests to disclose. The authors declare that no funds, grants or other support were received during the preparation of the manuscript.


\end{document}